\newtheorem{prethm}{{\bf Theorem}}
\newenvironment{thm}{\begin{prethm}{\hspace{-0.5
               em}{\bf.}}}{\end{prethm}}
\newtheorem{prepro}[prethm]{Proposition}
\newenvironment{pro}{\begin{prepro}{\hspace{-0.5
               em}{\bf.}}}{\end{prepro}}
\newtheorem{prelem}[prethm]{Lemma}
\newenvironment{lem}{\begin{prelem}{\hspace{-0.5
               em}{\bf.}}}{\end{prelem}}
\newtheorem{precor}[prethm]{Corollary}
\newtheorem{preremark}{{\bf Remark}}
\newenvironment{rem}{\begin{preremark}\em{\hspace{-0.5
              em}{\bf.}}}{\end{preremark}}
\newtheorem{preexample}{{\bf Example}}
\newtheorem{preproblem}{{\bf problem}}
\newtheorem{preproof}{{\bf Proof.}}
\newenvironment{proof}[1]{\begin{preproof}{\rm
               #1}\hfill{$\Box$}}{\end{preproof}}
\renewcommand{\thefootnote}
\begin{document}
\title{  Finite unitary rings    with    a single subgroup   of prime  order of the group of units  }
\author{ Mostafa Amini, Mohsen  Amiri\\
{\footnotesize {\em Department of Mathematics, Payame Noor University, Tehran, Iran  }}\\
{\footnotesize{\em Departamento de Matem\'atica-ICE-UFAM, 69080-900, Manaus-AM, Brazil}}}
\footnotetext{E-mail Address: {\tt;amini.pnu1356@gmail.com\,  m.amiri77@gmail.com} }
\date{}
\maketitle

%------------------------------------------------------------------------------------------------------------------------------
\begin{quote}
{\small \hfill{\rule{13.3cm}{.1mm}\hskip2cm} \textbf{Abstract.}

Let  $R$  be  a  unitary ring of finite cardinality $p^{\beta}k$, where $p$ is a prime number and $p\nmid k$.   We show that if    the group of units of $R$ has at most one subgroup of order $p$, then    $R\cong A\bigoplus B,$ where $B$ is a finite ring of order $k$ and $A$ is a ring of   cardinality $p^{\beta}$   which is one of the six
explicitly described types. \\[.2cm]
%\vspace{1mm} {\renewcommand{\baselinestretch}{1}
%\parskip = 10 mm
{
\noindent{\small {\it \bf 2010 MSC}\,:16P10, 16U60.}}\\
\noindent{\small {\it \bf Keywords}\,: Finite ring,  Group of units.}}\\
\vspace{-3mm}\hfill{\rule{13.3cm}{.1mm}\hskip2cm}
\end{quote}
%_

\section{Introduction}

Finite rings are one of the topics of interest in ring theory and have been studied from many different points of view.
 Several mathematicians
study the finite unitary rings  through   their groups of units (the group of units of a ring is also called Adjoint Group). For example,  in 1968, Eldridge in \cite{1112}   proved that if  $R$ is a finite ring with unit of order $m$ such that  $m$ has cube free factorization, then $R$ is a commutative ring.
 In 1989, Groza in  \cite{22} showed that if $R$ is a finite ring and at most  one simple
component of the semi-simple ring $\frac{R}{J(R)}$ is a field of order 2, then $R^*$ (units of $R$) is a
nilpotent group if and only if $R$ is a direct sum of two-sided ideals that are
homomorphic images of group algebras of type $SP$, where $S$ is a particular
commutative finite ring and  $P$ is a finite $p$-group for  a prime number $p$.
In 2009,  Dolzan in \cite{kesa} improved  this result and described  the structure of
finite rings in which the group of units is  nilpotent. 
J. F. Waters in \cite{Wat} gave a ring-theoretical characterisation of the class of radical rings  with finitely generated nilpotent group of units. 
 J. B. Bateman and D. B. Coleman in \cite{Bat} determined under what conditions the group of units
in the group algebra of a finite group is nilpotent. 
P. B. Bhattacharya and S. K. Jain in \cite{Jain1}  studied  certain classes of rings R for which
$R^*$ is nilpotent, supersolvable or solvable. Also, in  \cite{Jain2} they 
proved that if $F$ is a field of characteristic $p$ and  $G$ a finite group,   then the group
of units of the group ring $R=FG$ is nilpotent if and only if $G$ is nilpotent and each
$q$-sylow subgroup of $G$, $q \neq p$, is abelian.
 Recently, in 2018, M. Amiri and M. Ariannejad   in \cite{mohsen} classified all finite unitary rings in which all Sylow subgroups of the group of units are cyclic.
Here, we improve this result and we  classify all finite unitary rings of order $p^{\alpha}k$, where $p$ is a prime number and $p\nmid k$ in which  $R^*$ has one subgroup of order $p$.
Precisely,   we prove the following theorem:\\[.2cm]

{\bf Theorem.}\label{thm2222}
 Let  $R$  be  a  unitary ring of finite cardinality $p^{\beta}k$, where $p$ is a prime number and $p\nmid k$.   If    $R^*$ has at most one subgroup of order $p$, then, $R=A\bigoplus B,$ where $B$ is a ring of order $k$ and $A$ is a ring of
cardinality $p^{\beta}$ which is isomorphic to  one of the following six  types:
 $$ {\mathbb{Z}}_{p^{\alpha}},   M_2(GF(p)),   T_2(GF(2)),{\mathbb{Z}}_{p^{\alpha}}\bigoplus_{i=1}^k (GF(p^{n_i})),  M_2(GF(p))\bigoplus_{i=1}^c (GF(p^{n_i})),$$
$$     T_2(GF(2))\bigoplus_{i=1}^s (GF(2^{n_i})), \ n_i, k, c, s, \alpha\in \mathbb{N}.$$
Moreover, if $p=2$, then $\alpha\leq 2$.\\[.5cm]
%--------------------------------------------------------------------
 
The following are some necessary  preliminary concepts and notations.\\[.5cm]
Let $R$ be a ring with identity $1\neq 0$. We denote   the Jacobson radical of $R$ by $J(R)$, and the cardinal of a given set $X$ by $|X|$. The set of unit elements of $R$ (or the group of units of $R$) is denoted by $R^*$. For a given prime number $p$, we denote   the set of all Sylow $p$-subgroups of $R^*$ by  $Syl_p(R^*)$. 
 The prime subring of $R$, i.e. the subring   generated by identity element of $R$, is denoted by  $R_0$.  $R_0[S]$ is  the  subring generated by $\{S\cup R_0\}$  where $S$ is a subset of $R$.
The ring of all $n\times n$ matrices over $R$ is denoted by $M_n(R)$. The ring of all $n\times n$  upper triangular matrices is denoted by $T_n(R)$, and  the ring of integers modulo  $m$ is denoted by ${\mathbb{Z}}_m$.   We denote the characteristic of $R$ by $Char(R)$, and   $GF(p^m)$  denotes the unique finite field of characteristic $p$ and of order $p^m$.
The order of a group $G$ is denoted by $|G|$, and the subgroup generated by $g$  is denoted by $\langle g \rangle$.
Also, the order of an element $g\in G$ denoted by $o(g)$ is the order of $\langle g \rangle$. 
 
%%%%%%%%%%%%%%%%%%%%%%%%%%%%%%%%%%%%%%%%%%%%%%%%%%%%%%%%%%%%%%%%%%%%%%%%%%%%%%%%%%%%%%%%%%%%%%%%%%%%%%%%
%%%%%%%%%%%%%%%%%%%%%%%%%%%%%%%%%%%%%%%%%%%%%%%%%%%%%%%%%%%%%%%%%%%%%%%%%%%%%%%%%%%%%%%%%%%%%%%%%%%%%%%%%%%%%%%%%

\section{Results}
From here onwards  we assume that   $R$ is a finite unitary  ring of order equal to a power of prime number $p$,  unless we specifically mention otherwise.
We start with  the following  elementary  lemma:
\begin{lem}\label{2a}
Let   $I$ be an ideal of $R$ such that $I\subseteq J(R)$. If   Sylow $p$-subgroups of $R^*$ are  cyclic, then  all Sylow $p$-subgroups of $(\frac{R}{I})^*$ are cyclic.  In addition, we have
$(\frac{R}{I})^*=\frac{R^*+I}{I}$.
\end{lem}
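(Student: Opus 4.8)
The plan is to push everything through the canonical ring epimorphism $\pi: R \to R/I$, $r \mapsto r + I$. The whole statement rests on a single lifting fact: because $I \subseteq J(R)$, the restriction $\pi|_{R^*}: R^* \to (R/I)^*$ is a \emph{surjective} group homomorphism. Granting this, the ``In addition'' clause is immediate, since $\pi(R^*) = \{r + I : r \in R^*\} = (R^* + I)/I$, and surjectivity says precisely that this set equals $(R/I)^*$.

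To establish the lifting fact I would prove the slightly stronger statement $\pi^{-1}((R/I)^*) = R^*$, i.e. every preimage of a unit of $R/I$ is already a unit of $R$. Let $u \in R$ with $u + I \in (R/I)^*$, and choose $v \in R$ with $(u+I)(v+I) = (v+I)(u+I) = 1 + I$. Then $uv = 1 + i_1$ and $vu = 1 + i_2$ with $i_1, i_2 \in I \subseteq J(R)$. The key ring-theoretic input is that $1 + j$ is a unit of $R$ for every $j \in J(R)$ (equivalently, $1 - x$ is invertible for $x \in J(R)$, applied to $x = -j$). Hence $uv$ and $vu$ are both units, so $u$ is right-invertible via $u \cdot v(uv)^{-1} = 1$ and left-invertible via $(vu)^{-1} v \cdot u = 1$; a two-sidedly invertible element is a unit, whence $u \in R^*$. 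Combined with the obvious fact that units map to units, this yields $\pi(R^*) = (R/I)^*$.

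For the Sylow assertion I would invoke the standard fact that a surjective group homomorphism carries Sylow $p$-subgroups onto Sylow $p$-subgroups: if $P \in Syl_p(R^*)$, then $\pi(P) \in Syl_p((R/I)^*)$. By hypothesis $P$ is cyclic, and a homomorphic image of a cyclic group is cyclic, so $\pi(P)$ is a cyclic Sylow $p$-subgroup of $(R/I)^*$. Since all Sylow $p$-subgroups of a finite group are conjugate and conjugation preserves cyclicity, every Sylow $p$-subgroup of $(R/I)^*$ is cyclic, as claimed.

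I expect the only genuine obstacle to be the surjectivity of $\pi$ on units — that is, the unit-lifting step — because this is the single point at which the hypothesis $I \subseteq J(R)$ is actually used; everything following it is formal group theory (quotients of cyclic groups and the behaviour of Sylow subgroups under epimorphisms).
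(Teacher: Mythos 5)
Your proposal is correct and takes essentially the same route as the paper: the canonical epimorphism $R^*\to(\frac{R}{I})^*$ combined with the unit-lifting fact $1+J(R)\subseteq R^*$, followed by the standard transfer of cyclic Sylow $p$-subgroups along a surjection. The only difference is one of care, not of method: the paper jumps from $xy\in R^*$ to $x\in R^*$ (implicitly using finiteness of $R$), whereas you verify left and right invertibility separately and spell out the Sylow argument the paper leaves implicit.
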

\begin{proof}
{Canonical epimorphism  $f: R^*\longrightarrow (\frac{R}{I})^*$  defined by  $f(a)=a+I$ shows that every Sylow $p$-subgroup of  $(\frac{R}{I})^*$ is  cyclic.
Clearly, $\frac{R^*+I}{I}\subseteq (\frac{R}{I})^*$.  For the other side,  let $x+I\in (\frac{R}{I})^*$.
Then, there exists $y+I\in (\frac{R}{I})^*$ such that $xy+I=1+I$, and consequently
$xy-1\in I$. Since $I\subseteq J(R)$, we deduce that $xy=xy-1+1\in R^*$. So $x\in R^*$ and hence $x+I\in \frac{R^*+I}{I}$.
 }
\end{proof}

%-----------------------------------------------------------------------------------------------------
\begin{rem}\label{ddd1}
Let $R=A\bigoplus B$ be a  finite ring, where $A$ and $B$ are two ideals of $R$. Then, $R^*=A^*\bigoplus B^*$ and $1=1_A+1_B$, where $1_A$ and $1_B$ are identity elements of $A$ and $B$, respectively. It is clear that $A^*+1_B\leq R^*$ and $A^*+1_B\cong A^*$. In addition, if $p\mid gcd(|A^*|,|B^*|)$ for some prime number $p$, then by Cauchy Theorem, $R^*$ has two elements $a+1_B$ and $1_A+b$ with the same order $p$. Clearly, $\langle a+1_B\rangle\neq \langle 1_A+b\rangle$ and this impleis that Sylow $p-$subgroups of $R^*$ are not cyclic.
\end{rem}
We also need the following lemma,  which is a direct consequence of   Lemma 1.1 of \cite{22}.
\begin{lem} \label{www}Let  $R$  be  a finite unitary ring of odd cardinality. Then, $R = R_0[R^*]$.
\end{lem}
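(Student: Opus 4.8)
The plan is to reduce the statement to showing that $R$ is additively generated by its units. Indeed $R_0[R^*]$ is a subring containing $1$ and every unit, so it is closed under addition and subtraction; hence once we know that every element of $R$ is a finite sum of units we immediately obtain $R\subseteq R_0[R^*]\subseteq R$, giving equality. So it suffices to prove that every $x\in R$ can be written as a sum of elements of $R^*$.

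First I would pass to the semisimple quotient. Since $R$ is finite, $J(R)$ is nilpotent and, by Wedderburn--Artin, $R/J(R)\cong \prod_i M_{n_i}(F_i)$ for suitable finite fields $F_i$. Because $|R|$ is odd, each $|F_i|$ is odd, and in particular $|F_i|\geq 3$. The key technical step is the purely matrix-theoretic claim that over any field $F$ with $|F|\geq 3$ every element of $M_n(F)$ is a sum of two invertible matrices, and I expect this to be the main obstacle, although it is elementary. The property of being a sum of two invertibles is invariant under the equivalence $A\mapsto PAQ$ with $P,Q$ invertible, so one may assume $A=\mathrm{diag}(I_r,0_{n-r})$ is in rank normal form; then, choosing a scalar $\lambda\in F\setminus\{0,1\}$ (which exists precisely because $|F|\geq 3$), one has the explicit decomposition $\mathrm{diag}(I_r,0_{n-r})=\mathrm{diag}(\lambda I_r, I_{n-r})+\mathrm{diag}((1-\lambda)I_r,-I_{n-r})$ into two invertible matrices. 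Assembling such decompositions componentwise shows that every element of $R/J(R)$ is a sum of two units. This is where oddness is genuinely used: over $F=GF(2)$ the identity $1$ is not a sum of two units, which is consistent with $T_2(GF(2))\neq R_0[R^*]$ appearing as an exceptional type in the main Theorem.

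Finally I would lift back to $R$. Given $x\in R$, write its image as $\bar u+\bar v$ with $\bar u,\bar v\in (R/J(R))^*$; by Lemma \ref{2a} applied with $I=J(R)$, these lift to units $u,v\in R^*$, so that $x-u-v=j$ lies in $J(R)$. Every element $j$ of $J(R)$ is a difference of two units, since $1+j\in R^*$ and $j=(1+j)-1$. Combining, $x=u+v+(1+j)-1$ is a sum and difference of elements of $R^*$ (recall $1\in R^*$), hence lies in the additive subgroup generated by $R^*$. Therefore that subgroup is all of $R$, and $R=R_0[R^*]$ as required. Alternatively, the entire argument may be compressed into a citation of Lemma 1.1 of \cite{22}, of which this statement is a direct consequence.
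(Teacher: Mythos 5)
Your proof is correct, and it takes a genuinely different route from the paper: the paper supplies no argument at all for this lemma, recording it only as a direct consequence of Lemma 1.1 of \cite{22} (Groza) --- which is precisely the fallback you offer in your final sentence. Your self-contained version --- reduce to additive generation by units, apply Wedderburn--Artin to $R/J(R)$, decompose each matrix over a field with at least $3$ elements as a sum of two invertibles via the rank normal form, lift units modulo the radical, and absorb $J(R)$ through $j=(1+j)-1$ --- is sound at every step, and it has two advantages over the citation: it makes transparent exactly where odd cardinality enters (each Wedderburn component is a matrix ring over a field of odd order, hence of order at least $3$, which is exactly what the two-invertibles decomposition needs), and it explains why $T_2(GF(2))$ survives as an exceptional type in the main theorem, since over $GF(2)$ the decomposition genuinely fails. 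It also proves slightly more than the lemma asserts, namely that every element of $R$ is a $\mathbb{Z}$-combination of boundedly many units. One small point of hygiene: Lemma \ref{2a} is stated under the section's standing assumption that $|R|$ is a power of a single prime $p$ (and its first clause carries a cyclicity hypothesis), whereas your $R$ is an arbitrary ring of odd order; this is harmless because the unit-lifting assertion $(\frac{R}{I})^*=\frac{R^*+I}{I}$ is proved there using only $I\subseteq J(R)$ and finiteness, but it would be cleaner either to note this explicitly or to reprove the one-line lifting fact (if $xy-1\in J(R)$ then $xy\in R^*$, so $x\in R^*$) rather than invoke the lemma outside its stated scope.
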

%-------------------------------------------------------------------------------------------------
In the first step, we characterize all  unitary  finite rings $R$ with $R=R_0[R^*]$ in which all Sylow $p-$subgroups are cyclic.
\begin{pro}\label{thm222} Suppose that $R=R_0[R^*]$.
   If all Sylow $p-$subgroups of  $R^*$ are cyclic, then $R$ is a commutative ring or $R\cong M_2(GF(p))$ or  $R\cong M_2(GF(p))\bigoplus B,$ where  $B$ is a direct sum of finite fields of characteristic $p$.
\end{pro}
\begin{proof}{We proceed by induction on $|R|=p^{\beta}$. If $\beta<3$, then    by Lemma of \cite{1112} (page 512), $R$
 is commutative.  So   suppose that $\beta \geq 3$.
 We may assume that $R$ is not commutative.    If there exists  proper unitary subring $S$ of $R$ such that $S=S_0[S^*]$, then by induction hypothesis, $S$ is a commutative ring or $S\cong M_2(GF(p))$ or  $S\cong M_2(GF(p))\bigoplus B,$ where  $B$ is a direct sum of finite fields of characteristic $p$.
 We have two cases with respect to the  Jacobson radical, either  $J(R)=0$ or $J(R)\neq 0$:\\[.5cm]
%---------------------------------------------------------------------------------------------------
{\bf Case 1.} If $J(R)=0$, then $R$ is a simple artinian ring, and so by the structure theorem of Artin-Wedderburn,  we have $R\cong \bigoplus_{i=1}^t M_{n_i}(D_i),$ where  any $D_i$ is a finite field.   If $n_i>2$ for some $1\leq i\leq t$, then Sylow $p-$subgroup of $R$ has order bigger than $p$. Therefore, it is not cyclic,  which is a  contradiction. Since $R$ is not commutative, we may assume that  $n_1=2$. If $n_i>1$ for some $2\leq i\leq t$, then  Sylow $p-$subgroup of $R$ has order bigger than $p$. Therefore, it is not cyclic,   which is a  contradiction. Consequently, $R\cong M_{n_1}(GF(p))\bigoplus B,$ where  $B$ is a direct sum of finite fields of characteristic $p$.    
  \\[.5cm]
 %---------------- ---------------------------------------------------------------
{\bf Case 2.}  Suppose that $J(R)\neq 0$.
Let $I\subseteq J(R)$ be a minimal ideal of $R$ and let  $char(I)=p^i$. If $i>1$, then
      $Ip$ is a non-trivial ideal of $R$, and so
      $I=Ip$. Let $s\in I$. Then, $s=\sum vp$ for some $v\in I$.
    It follows that $sp^{i-1}=\sum vp^i=0$, and hence  $char(I)=p^{i-1}$,  a contradiction. Hence, $char(I)=p$. Clearly, $I^2=0$. Thus,
  for all $s\in I$, we have $(1+s)^p=1$.  Therefore   $1+I$ is an elementary abelian $p-$group. Since Sylow $p-$subgroups of $R^*$ are cyclic, we have $|1+I|=|I|=p$. Hence, $I=\{0,a,2a,3a,\ldots,(p-1)a\}$ for any non-zero element $a\in I$.  On the other hand, $|ann_R(a)|=\frac{|R|}{p}$.
  Then, for each $x\in R$ there exists an integer $1\leq i\leq p$ such that
  $x+i\in ann_R(a)$. Let $P\in Syl_p(R^*)$, and let $P=\langle z\rangle$.
   Since $1+I\lhd R^*$, by Normalizer-Centralizer Theorem,
there exists a  monomorphism $f$ from $\frac{P}{C_P(1+a)}$ into $Aut(1+I)\cong C_{p-1}$, and consequently, $1+I\leq Z(P)$.

Let $x\in R^*$ such that $gcd(o(x),p)=1$. Since $x^{-1}(1+a)x\in I$, there exists $1\leq j\leq p-1$ such that $x^{-1}(1+a)x=j(1+a)$. So,   $x^{-(p-1)}(1+a)x^{p-1}=j^{p-1}(1+a)=1+a$, and hence
$(1+a)\in C_{R^*}(x^p)=C_{R^*}(x^p)$. Thus, $1+a\in Z(R^*)$, and so $a\in Z(R)$.
Consequently, $1+I\leq Z(R^*)$ and  $I\subseteq Z(R)$, because $R=R_0[R^*]$.

First suppose that $1+I\neq P$.
Since $z^ja\neq 0$ for all integers $j$ and $I=\{0,a,\ldots,(p-1)a\}$, there exists an integer $2\leq i\leq p-1$ such that 
$ z^{i}a=a$.  Then, $(z^i-1)a=0$, and so $z^i-1\in ann_R(a)$.
Since $a\in Z(R)$, we deduce that $ann_R(a)$ is an ideal of $R$.
 Also, there exists an integer $1\leq j\leq p-1$ such that
$z-j\in ann_R(a)$. It follows that
$j^i-1\in ann_R(a)$, and so $j^i-1= 0  \ (mod \ p)$.
Consequently, $p\mid i$, which is a contradiction. Therefore,
$1+I=P$. Since  $1+J(R)\leq P$, we have
$J(R)=I$ and $1+J(R)=P$. By Wedderburn Structure Theorem,   $\frac{R}{J(R)}\cong \bigoplus_{i=1}^t M_{n_i}(D_i),$ where  any $D_i$ is a finite field.
 By induction hypothesis,  $\frac{R}{J(R)}$ is commutative or $\frac{R}{J(R)}\cong M_2(GF(p))$ or   $\frac{R}{J(R)}\cong M_2(GF(p))\bigoplus B,$ where  $B$ is a direct sum of finite fields of characteristic $p$.

First suppose that $\frac{R}{J(R)}\cong M_2(GF(p))$ or   $\frac{R}{J(R)}\cong M_2(GF(p))\bigoplus B,$ where  $B$ is a direct some of finite fields of characteristic $p$. Let $A$ be an ideal of $R$ such that $\frac{R}{A}\cong M_2(GF(p))$. Then, $|R|=|A|p^4$.
      Let $z\in (\frac{R}{A})^*$ with $o(z+A)>1$.
       We have $az\in J(R)=\{0,a,\ldots,(p-1)a\}$. Therefore, $az=ja,$ where $1\leq j\leq p-1$, and so   $z-j\in ann_R(J(R))$. Since $\frac{R}{A}$ is a simple ring and $ann_R(a)$ is a two sided ideal ($a \in Z(R)$), we deduce that $ann_R(a)\subseteq A$.
      On the other hand, we have $|ann_R(a)|=\frac{|R|}{p}$, and so we obtain that  $|R|\leq p|A|$, a contradiction. So, $\frac{R}{J(R)}$ is commutative.

Let $x, y\in R^*$ such that $p\nmid o(x)$ and $p\nmid o(y)$.
Since $xy-yx\in J(R)$, we have $[x,y]-1\in J(R)$. Consequently, $[x,y]\in Z(R^*)$, and
 $R^*$ is a nilpotent group. It follows that $R^{*}$ is a direct sum of  Sylow subgroups.
Let $x, y\in Q\in Syl_q(R^*)$, where $p\nmid q$.
Also, let $xy-yx=ja$ and  $xy=cyx,$ where $c\in Z(R^*)$.
Then, we have $cyx-yx=ja$. Hence
$(c-1)yx=ja$, and so $jax^{-1}y^{-1}=c-1\in \{1,2,\ldots,p\}$. Thus, $o(c)\mid p$, and then
$x^py=c^pyx^p=yx^p$. Since
$gcd(p,o(x))=1$, we have $xy=yx$.
Consequently, $R^*$ is an abelian group, and hence $R$ is commutative, which is  our final contradiction.
}
\end{proof}
Let $p$ be a prime number.
For simplicity,   let $\Gamma(p)$  be the set of all finite rings $A$ of the following types:

(a) If $p>2$, then    $A$ is a direct sum of finite fields of order $p$ or a direct sum of finite fields of order $p$ and  ${\mathbb{Z}}_{p^m}$ or a direct sum of finite fields of order $p$ and $M_2(GF(p))$ or $M_2(GF(p))$.

(b) If $p=2$, then    $A$ is a direct sum of finite fields of order $2$ or a direct sum of finite fields of order $2$ with  ${\mathbb{Z}}_{4}$ or a direct sum of finite fields of order $2$ with $M_2(GF(2))$ or $M_2(GF(2))$.
\begin{pro}\label{thm3} Let $R=R_0[R^*]$.
  If  all Sylow $p$-subgroups of  $R^*$ are cyclic, then $R\in \Gamma(p)$.
\end{pro}
\begin{proof}
{By Proposition \ref{thm222},  we may assume that $R$  is a commutative ring. Let $|R|=p^{\beta}$. Then,
   we proceed by
induction on $\beta$. First suppose that $|R|=p^2$. If $R$ is not a field, then   we have two possible isomorphism classes ${\mathbb{Z}}_{p^2}$  and ${\mathbb{Z}}_p\bigoplus {\mathbb{Z}}_p$. Clearly, $R\in \Gamma$ in this case.  Hence
 we may assume  that   $\beta>2$.  We consider two cases depending on the Jacobson radical: $J(R)=0$ or $J(R)\neq 0$.

 {\bf Case 1.} First, let $J(R)=0$.  Since  $R$ is a semi-simple ring,   by Wedderburn Structure Theorem, $R\cong \bigoplus_{i=1}^k R_i$   is a direct product of matrix algebras over division algebras.  $R$ is a commutative ring, so $R_i$ is a finite field for each $i$. On the other hand, $|R_i|=p^{\alpha_i}$ for some integer $\alpha_i$($\alpha_i\leq t$), and hence we have $|R_i^*|\geq p-1$.  Since the Sylow $p-$subgroup is cyclic,
 $|R_i|>p$ at most for one $i$, and so $R\in \Gamma(p)$.

   {\bf Case 2.}
 Let $J(R)\neq 0$, and
   $I\subseteq J(R)$ be a minimal ideal of $R$. Similar to the proof of Proposition \ref{thm222} case 2, we can prove that, $char(I)=p$, $I^2=0$ and   $1+I$ is an elementary abelian $p-$group. Since Sylow $p-$subgroups of $R^*$ are cyclic, we have $|1+I|=|I|=p$. So, $I=\{0,a,2a,3a,\ldots,(p-1)a\}$ for any non-zero element $a$ belonging  to $I$.
  Let $P\in Syl_p(R^*)$ and $P=\langle z\rangle$.
Since $z^ia\neq 0$ for all integers $i$ and $|I|=p$, we have
$ z^p=1$, and it follows that $|P|=p$. Also, $1+J(R)\leq P$, hence
$J(R)=I$ and $1+J(R)=P$.

Let $\{M_1,\ldots,M_e\}$ be the set of all maximal ideals of $R$. By the structure theorem of Artin-Wedderburn, there exists  an isomorphism $f$ from   $ \bigoplus_{i=1}^e \frac{R}{M_i}$ to $\frac{R}{J(R)}$.  For each $x\in R$, there exists $1\leq j\leq p$ such that $xa=ja$, and hence
$x-j\in ann_R(a)$. Thus, $\frac{|R|}{|ann_R(a)|}=p$, and so   $ann_R(a)$ is a maximal ideal of $R$.
 We may assume that $M_1=ann_R(a)$.

 Let $f((1+M_1,M_2,M_3,\ldots,M_k))= x+J(R)$.
 It is clear that $|\frac{ann_R(x)+J(R)}{J(R)}|=\frac{|R|}{p|J(R)|}$, and so $|Rx|=|\frac{R}{ann_R(x)}|=p|J(R)|$. Since $ax\neq 0$, we have $a\not\in ann_R(x)$.  Also, $Jac(R)$ is a minimal ideal of $R$ and $a\not\in ann _R(x)$, so $ann_R(x)\cap J(R)=0$.
Since $|R|=|ann_R(x)|p$, we deduce that
  $R=ann_R(x)\bigoplus Rx$.
   By induction hypothesis, $ann_R( x)$ and $ Rx$   belong to the set  $ \Gamma(p)$. Evidently, $R\in \Gamma(p)$, because $gcd((|ann_R( x))^*|,p)=gcd(|(Rx)^*|,p)=1$.

Hence we may assume that $e=1$. Then, $M_1=J(R)$ and $\frac{R}{J(R)}$ is a finite field.
It is easy to see that every Sylow $q-$subgroup of $R^*$ is cyclic for all $q\mid |R^*|$, and
consequently by main result of \cite{mohsen}, $R\in \Gamma(p)$.
}
\end{proof}

Now, by Lemma \ref{www}, it   remains  to characterize the ring of order $2^{\beta}$ in which all its  Sylow $2$-subgroups are cyclic and $R\neq R_0[R^*]$.

\begin{pro}\label{t2}
Let  $R$  be  a  unitary ring of finite cardinality $2^{\beta}$ and  $H=R_0[R^*]$ such that every Sylow $2$-subgroup of  $R^*$ is a cyclic group.

(a)
If $R$ is a commutative ring, then   $R\in \Gamma(2)$.

(b) If $H$ is a commutative ring and $R$ is a non-commutative ring, then either   $R\cong T_2(GF(2))$ or  $R\cong T_2(GF(2))\bigoplus_{i=1}^k (GF(2^{n_i}))$ for some positive integer $k$.
\end{pro}
\begin{proof}{We proceed by induction  $|R|$.  Let $I\subseteq J(R)$ be a minimal ideal of $R$.
Since Sylow $2-$subgroups of $R^*$ are cyclic, we have $|1+I|=|I|=2$. Therefore, $I=\{0,a\}$ for any non-zero element $a$ belongs to $I$.
Let $P\in Syl_p(R^*)$, and let $P=\langle z\rangle$. Then,
since $z^ia\neq 0$ for all integers $i$ and $|I|=2$, we have
$ z^2=1$ and it follows that $|P|=2$. But since $1+J(R)\leq P$, we deduce that
$J(R)=I$ and $1+J(R)=P$.  It is clear that $|ann_R(a)|=\frac{|R|}{2}$ and
by Proposition of \cite{1}, every unitary  non-commutative ring of order $8$ is isomorphic to $T_2(GF(2))$. So  we may assume that $|R|>8$.
Also, $J(\frac{R}{I})=0$. Thus
by Wedderburn Structure Theorem, $\frac{R}{I}\cong \bigoplus_{i=1}^k R_i$   is a direct product of matrix algebras over division algebras, and since $R_0[R^*]$ is commutative,   $R_i$ is a finite field for each $i=1,2,\ldots,k$.

      Let $\{M_1,\ldots,M_k\}$ be the set of all maximal ideals of $R$.
 If $k=1$, then  $J(R)=M_1=ann_R(a)$, because $\frac{R}{I}$ is commutative.
Since $[R:ann_R(a)]=2$, we have $R=R_0[(1+J(R))]=R_0[R^*]$, which is a contradiction. So
 $k>1$.  We may assume that $M_1=ann_R(a)$.
 
Let $f$ be an ismorphism from  $ \frac{R}{M_1}\bigoplus\frac{R}{M_2}\bigoplus\ldots\bigoplus\frac{R}{M_k}$  to $ \frac{R}{J(R)}$.
 Let \break $f((1+M_1,M_2,M_3,\ldots,M_k))= x+J(R)$.  Since $I$ is the unique minimal ideal of $R$ and $a\not\in ann _R(x)$, we have    $ann_R(x)\cap J(R)=0$.
 It is clear that $|\frac{ann_R(x)+J(R)}{J(R)}|=\frac{|R|}{2|J(R)|}$. Consequently, $|Rx|=|\frac{R}{ann_R(x)}|=2|J(R)|$. Since $ax\neq 0$, we have $a\not\in ann_R(x)$.  Also, $I=J(R)$. So
 $I\subseteq Rx$,  and hence $R=ann_R(x)\bigoplus Rx$.
 
Clearly,   $x$ is the identity element of $Rx$. Hence
 $(x+a)^2=x^2=x$, and so  $x+a$ is an element of $Rx$ of order $2$.
Let $y\in ann_R(x)$ be the identity element of   $ann_R(x)$ and
let  $u\in ann_R(x)$ such that   $u^2=y$.    Since  $o(u+(x+a))=o(y+(x+a))=2$ and the Sylow $2-$subgroup is cyclic, we have
$u+x=y+x+a$, and hence $u-y=a\in ann_R(x)$, a contradiction. Therefore, $|(ann_R(x))^*|$ is an odd number.  Since $J(ann_R(x))=0$, we have
$ann_R(x)\in \Gamma(2)$.

If $R$ is a commutative ring, then $R\cong {\mathbb{Z}}_{4}\bigoplus ann_R(x)\in \Gamma(2)$.
If $R$ is not a commutative ring, then  $Rx$ is not a commutative ring, because  otherwise $x(ax)=(ax)x=ax^2=x$, and so $(xa-1)x=0$, consequently, $x=0$, which is a contradiction. So by induction hypothesis,  $Rx\cong T_2(GF(2))$ or $R\cong T_2(GF(2))\bigoplus_{i=1}^m (GF(2^{n_i}))$ for some positive integer $m$.
  }
\end{proof}

\begin{thm}\label{thm2121}
 Let  $R$  be  a  unitary ring of finite cardinality $p^{\beta}$, where $p$ is a prime.   If  all Sylow $p$-subgroups of  $R^*$ are cyclic, then,
$R$  is isomorphic to  one of the following six  types:
 $$  {\mathbb{Z}}_{p^{\alpha}},   M_2(GF(p)),   T_2(GF(2)),{\mathbb{Z}}_{p^{\alpha}}\bigoplus_{i=1}^k (GF(p^{n_i})),  M_2(GF(p))\bigoplus_{i=1}^c (GF(p^{n_i})),$$
$$     T_2(GF(2))\bigoplus_{i=1}^s (GF(2^{n_i})), \ n_i,k,c,s,\alpha\in \mathbb{N}.$$
Moreover, if $p=2$, then $\alpha\leq 2$.
 
 \end{thm}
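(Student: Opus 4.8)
The plan is to read this as the prime-power engine of the abstract theorem and to dispose of it by the parity of $p$, feeding either $R$ or the subring $R_0[R^*]$ into Propositions \ref{thm222}, \ref{thm3} and \ref{t2}. Throughout I would use that every subring $S\le R$ has $|S|$ a power of $p$ (additive Lagrange), that $J(R)$ is nilpotent, and that Wedderburn's theorem together with the local decomposition of finite commutative rings is available. If $p$ is odd then $|R|=p^{\beta}$ is odd, so Lemma \ref{www} gives $R=R_0[R^*]$ and Proposition \ref{thm3} applies verbatim: $R\in\Gamma(p)$, which is exactly the odd-$p$ content of the six-item list, namely $\mathbb{Z}_{p^{\alpha}}$, $M_2(GF(p))$, and their direct sums with fields of characteristic $p$. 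So the odd case reduces to a citation plus the routine matching of $\Gamma(p)$-membership to the named isomorphism types.

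The substance is $p=2$. Here I would set $H=R_0[R^*]$ and first record the key reduction: the inverse of any unit of $R$ lies in $R^*\subseteq H$, so $H^*=R^*$; hence $H=H_0[H^*]$ and $H$ inherits the hypothesis that its Sylow $2$-subgroups are cyclic. If $R$ is commutative, Proposition \ref{t2}(a) gives $R\in\Gamma(2)$, i.e. the types $\mathbb{Z}_{2^{\alpha}}$ and $\mathbb{Z}_{2^{\alpha}}\oplus\bigoplus GF(2^{n_i})$. If $R$ is non-commutative but $H$ is commutative, Proposition \ref{t2}(b) gives $R\cong T_2(GF(2))$ or $T_2(GF(2))\oplus\bigoplus GF(2^{n_i})$. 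This already covers the genuinely new regime $R\neq R_0[R^*]$ flagged immediately before the statement.

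The one configuration not handled by \ref{t2} is $p=2$ with $H$ non-commutative, and this is where I expect the main work to lie. By Proposition \ref{thm222}, $H$ is then $M_2(GF(2))$ or $M_2(GF(2))\oplus B$ with $B$ a sum of fields, so $H$ is semisimple and $J(H)=0$. I would then promote this to $R$: since $1+J(R)\subseteq R^*=H^*\subseteq H$ we get $J(R)\subseteq H$, and as $J(R)$ is an ideal of $R$ sitting inside the subring $H$ it is a two-sided ideal of $H$; being nilpotent it must vanish in the semisimple ring $H$, whence $J(R)=0$ and $R\cong\bigoplus_j M_{n_j}(GF(2^{k_j}))$. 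The cyclic-Sylow-$2$ hypothesis now pins the list down: a factor with $n_j\ge 2$ contributes the unipotent group, which is elementary abelian of order $2^{k_j}$ when $n_j=2$ and non-abelian (hence non-cyclic) when $n_j\ge 3$; moreover two distinct factors of even unit order violate cyclicity by Remark \ref{ddd1}. Thus at most one matrix factor survives and it must be exactly $M_2(GF(2))$, the remaining factors being fields, giving $R\cong M_2(GF(2))$ or $M_2(GF(2))\oplus\bigoplus GF(2^{n_i})$.

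Finally, the extra constraint $\alpha\le 2$ when $p=2$ falls out because $\mathbb{Z}_{2^{\alpha}}^*\cong C_2\times C_{2^{\alpha-2}}$ is non-cyclic as soon as $\alpha\ge 3$, contradicting the hypothesis; the same Remark \ref{ddd1} mechanism is what forbids two even unit-groups from coexisting and underlies the ``at most one matrix block'' step above. The hard part is precisely the $p=2$, $H$ non-commutative branch, since establishing $J(R)=0$ and then reading off the semisimple classification is the piece that lies outside the black-box Propositions; everything else is assembling the citations and carefully identifying the resulting rings with the six explicit types.
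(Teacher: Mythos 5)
Your proof is correct, and where the paper actually has an argument it is the same one: odd $p$ through Lemma \ref{www} and Propositions \ref{thm222}, \ref{thm3}, and for $p=2$ the two commutative branches through Proposition \ref{t2}. But the comparison is lopsided in your favour, because the paper's entire proof is the single sentence ``The proof is clear by Propositions \ref{thm222} and \ref{t2},'' and those propositions do \emph{not} cover your third branch: $p=2$ with $H=R_0[R^*]$ non-commutative and $R\neq H$. Proposition \ref{thm222} requires $R=R_0[R^*]$, and both parts of Proposition \ref{t2} assume that $R$ or $H$ is commutative. The branch is not vacuous: for $R=M_2(GF(2))\oplus GF(2)\oplus GF(2)$ the unit group is $GL_2(GF(2))\cong S_3$, whose Sylow $2$-subgroups are cyclic, while $R_0[R^*]=\{(m,a,a): m\in M_2(GF(2)),\ a\in GF(2)\}$ is a proper non-commutative subring; this $R$ is caught only by your extra argument. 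That argument is sound: $H^*=R^*$ (inverses of units of $R$ lie back in $H$), so Proposition \ref{thm222} applies to $H$, and being non-commutative, $H$ is semisimple; then $1+J(R)\subseteq R^*\subseteq H$ gives $J(R)\subseteq H$, where $J(R)$ is a nilpotent two-sided ideal, hence $J(R)\subseteq J(H)=0$; Artin--Wedderburn writes $R$ as a direct sum of matrix rings over finite fields, and the cyclic-Sylow hypothesis (unitriangular subgroups are non-cyclic for $M_2(GF(2^k))$ with $k\ge 2$, non-abelian for $n\ge 3$, and Remark \ref{ddd1} forbids two summands of even unit order) leaves exactly $M_2(GF(2))$ plus fields, which lies in the stated list. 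So: same skeleton as the paper where the paper has one, plus a correct closing of a case that the paper's one-line proof silently skips; your version is the one that actually proves the theorem as stated.
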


%------------------------------------------
 %-----------------------------------------------------------------------------------------------------------------------------------------------
\begin{proof}{ The proof is clear by Propositions \ref{thm222}  and  \ref{t2}.
}

\end{proof}
Every element of order two in any group is called an involution.
The following lemma helps us to classify all finite unitary rings with a single  involution.

\begin{lem}\label{t22}
Let  $R$  be  a  unitary ring of finite cardinality $2^{\beta}$ and  $H=R_0[R^*]$ such that  $R^*$ has only one involution. Then, all Sylow $2-$subgroups of $R^*$ are cyclic.

\end{lem}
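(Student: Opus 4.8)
The plan is to convert the single-involution hypothesis into a statement about finite $2$-groups and then to remove the one remaining obstruction using the ring structure. I would first reduce to $R=R_{0}[R^{*}]$: every unit of $R$ lies in $H=R_{0}[R^{*}]$ together with its inverse, and a unit of $H$ is a unit of $R$, so $R^{*}=H^{*}$ and it is harmless to assume $R=R_{0}[R^{*}]$. Writing $Char(R)=2^{m}$, I would next bound $m$: the element $c=2^{m-1}$ satisfies $2c=0$ and $c^{2}=2^{2m-2}=0$ once $m\ge 2$, so $1+c$ is an involution, and for $m\ge 3$ one checks $1+c\neq\pm 1$, producing a second involution against the hypothesis. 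Hence $Char(R)\in\{2,4\}$ — this is exactly where the bound $\alpha\le 2$ of the main theorem originates. Calling the unique involution $z$, uniqueness makes $z$ central in $R^{*}$, hence central in $R=R_{0}[R^{*}]$; since $z-1$ is central with $(z-1)^{2}=0$, it generates a nil ideal, so $z\in 1+J(R)$.

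The structural heart is the classical dichotomy that a finite $2$-group with a unique subgroup of order $2$ is cyclic or generalized quaternion. A single involution means $R^{*}$ has a unique subgroup of order $2$, so every $P\in Syl_{2}(R^{*})$ is cyclic or generalized quaternion; since Sylow subgroups are conjugate, the lemma is equivalent to showing that no Sylow $2$-subgroup is generalized quaternion. As each generalized quaternion group contains a copy of the quaternion group of order $8$, everything reduces to proving that $R^{*}$ contains no such copy.

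To attack this I would suppose $i,j\in R^{*}$ generate a quaternion group of order $8$, so $i^{2}=j^{2}=(ij)^{2}=z$ and $ji=i^{-1}j$, and try to manufacture a second involution. In characteristic $2$, writing $z=1+a$ with $a\in Z(R)\cap J(R)$, $a^{2}=0$, and $p=i-1$, $q=j-1$, uniqueness gives $au\in\{0,a\}$ for all $u$, whence $ap=aq=0$, while the relations force $p^{2}=q^{2}=a$, $(pq)^{2}=(qp)^{2}=0$ and a commutator identity $qp-pq=a$. The point of this bookkeeping is that any square-zero $u$ yields an involution $1+u$ (since $(1+u)^{2}=1+u^{2}$), so uniqueness ought to pin every such $u$ into $\{0,a\}$ and thereby collide with $ji\neq ij$. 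In characteristic $4$, where $z=-1$ and $ji=-ij$, the analogous candidates are $1+2i$, $1+2j$, $1+2ij$, each of which squares to $1$, and the same uniqueness pressure would be applied.

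I expect this final step to be the genuine obstacle, and it is subtle for a structural reason: the quaternion group itself already has a unique involution, so no counting argument inside $R^{*}$ can exclude it — the contradiction must come entirely from how the quaternion relations interact with the ring axioms in characteristic $2$ and $4$. Concretely, the danger is a $16$-element local ring in which $p,q$ generate a two-step nilpotent radical with a single square-zero line $\{0,a\}$ and $pq\neq qp$; the hard part will be to show that such a configuration cannot sit inside the units of a ring having only one involution. Closing this case — and in particular confirming that the square-zero bookkeeping really forces a second involution rather than merely being consistent with a single one — is where the full weight of the lemma lies.
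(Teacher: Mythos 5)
Your reductions are all correct, and in places cleaner than what the paper itself does: the passage to $H=R_0[R^*]$, the bound $Char(R)\in\{2,4\}$, the centrality of the unique involution, Burnside's dichotomy, and the reduction to showing that $R^*$ contains no copy of $Q_8$. The characteristic-$2$ bookkeeping ($a$ central, $au=a$ for units $u$, $p^2=q^2=a$, $ap=aq=0$, $qp=pq+a$, $(pq)^2=(qp)^2=0$, hence $pq,qp\in\{0,a\}$) is also right. But the step you flag as the remaining obstacle is not a gap that can be closed: the ``danger'' configuration you describe actually exists, and it refutes the lemma. Let $R$ be the $GF(2)$-algebra with basis $\{1,p,q,a\}$ and products
$$ p^2=q^2=qp=a,\qquad pq=0,\qquad ap=pa=aq=qa=a^2=0. $$
Associativity is immediate: any product of two of $p,q,a$ lies in $\{0,a\}$ and $a$ annihilates everything, so all triple products vanish (equivalently, realize $R$ inside $M_4(GF(2))$ by its left regular representation). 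Then $R$ is local of order $16$ with $J(R)=\mathrm{span}\{p,q,a\}$, so $R^*=1+J(R)$ has order $8$; from $(\alpha p+\beta q+\delta a)^2=(\alpha+\beta+\alpha\beta)a$ one sees that $1+a$ is the \emph{only} involution, while $(1+p)(1+q)=1+p+q$ and $(1+q)(1+p)=1+p+q+a$ differ, so $R^*$ is nonabelian of order $8$ with a unique involution, i.e. $R^*\cong Q_8$. Here $R=R_0[R^*]$, $pq=0$, $qp=a$: precisely your feared $16$-element local ring, with a single square-zero line and a non-cyclic Sylow $2$-subgroup.

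So your closing observation --- that the quaternion relations are perfectly consistent with the square-zero bookkeeping and that the contradiction would have to come from somewhere else --- is not a caution but the truth: there is no contradiction, and the lemma (hence also the paper's main theorem, since this $R$ has exactly one subgroup of order $2$ in $R^*$ yet is none of the six listed types, all of which have cyclic Sylow $2$-subgroups) is false. It is worth seeing where the paper's own argument breaks, because it is exactly at the point you isolated. After reducing to a local ring of order $16$ with $P=Q_8=\langle u,l\rangle$ (in the example above, $u=1+p$, $l=1+q$), the paper deduces $u+l+1=ul$ or $u^3+l+1=ul$, and in the first case passes from $u(l-1)=l+1$ to the identity $lu(l-1)=u(l-1)l$ and then ``cancels'' to get $lu=ul$. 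That cancellation is illegitimate: $l-1\in J(R)$ is nilpotent, so the identity only yields $(lu-ul)(l-1)=0$. In the ring above one indeed has $u+l+1=ul$ and $(lu-ul)(l-1)=aq=0$, yet $lu\neq ul$; the second branch fails the same way. Your proposal therefore stops exactly where the paper's proof is invalid, and no completion is possible --- the correct resolution of this statement is a counterexample, not a proof.
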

\begin{proof}{We proceed by induction on $|R|$. Let $P\in Syl_2(R^*)$. The $2$-groups with unique involution were determined by Burnside; they must be cyclic or generalized quaternion groups. Hence, if $R$ is commutative,  then Sylow $2$-subgroup is cyclic. So suppose that $R$ is a non-commutative ring.   Then,  $$P=Q_{4n}=\langle \{u,l: u^{2n}=l^4=1, u^n=l^2, l^{-1}ul=u^{-1}\} \rangle .$$

The case  $|R|=2^3$,  is clear by  Proposition of \cite{1}. Assume that $|R|>2^3$.

First, let $J(R)=0$. Since  $R$ is a semi-simple ring,   by Wedderburn Structure Theorem, $R\cong \bigoplus_{i=1}^k R_i$   is a direct product of matrix algebras over division algebras.   On the other hand,  $R$ is not commutative. So, we  may assume that $n_1>1$, and in this case, the number of involutions of $R$ is more than one, which is a contradiction. Hence $J(R)\neq 0$.
Since $1+J(R)$ is a $2$-group, we have $|J(R)|\leq 4n$. Now, let $I\subseteq J(R)$ be a minimal ideal of $R$. It is easy to prove that $1+I$ is an elementary abelian $2-$group, and since  $R^*$ has only one involution, we have $|1+I|=|I|=2$. Hence, $I=\{0,a \}$ for any non-zero element $a$ belonging to $I$.  Clearly, $|ann_R(a)|=\frac{|R|}{2}$ and $1+a\in Z(R^*)$.
If $|P|\geq 16$, then $\frac{R}{I}$ has only one involution, because $\frac{P+I}{I}\cong Q_{2{(n-1)}}$, and hence by induction hypothesis,
$\frac{P+I}{I}$ is cyclic, which is a contradiction.  So $|P|\leq 8$.

Let $\{M_1,\ldots,M_k\}$ be the set of all maximal ideals of $R$.
 If $k=1$, then   $J(R)=M_1$.
 If $|\frac{R}{J(R)}|>2$, then by Lemma 1.1 of [10] we have  $R=R_0[R^*]$, and so
 $a\in Z(R)$. It follows that $ann_R(a)$ is a two sided ideal of $R$, and so $ann_R(a)=Jac(R)$. But $|\frac{R}{ann_R(a)}|=2$, which is a contradiction. So $\frac{R}{J(R)}$ is a finite field. 
 Let $w$ be a generator for the cyclic group $(\frac{R}{J(R)})^*$.
 Since $w-1\in ann_R(a)$, we have $|\frac{R}{J(R)}|=2$, and so
 $J(R)=ann_R(a)$. Hence $|R|\leq 16$. If $|J(R)|\leq 4$, then $|R|\leq 8$, which is a contradiction. So $|J(R)|=8$, and then $|J(\frac{R}{I})|=4$.
 If $\frac{R}{I}$ is not commutative, then by Proposition of \cite{1}, every unitary  non-commutative ring of order $8$ is isomorphic to  $T_2(GF(2))$.
 But $|J(T_2(GF(2))|=2$, which is a contradiction. Therefore $\frac{R}{I}$ is   commutative.
 Since $\frac{G}{J(R)}\cong GF(2)$, we have $u+l, u^3+l\in J(R)\setminus Z(R)$, and consequetly $u+l+1$ and $u^3+l+1$  have   order 4.
 We have $P=\langle u\rangle\cup \langle l\rangle \cup \langle ul\rangle$.
 It follows from $P$ is not an abelian group that, $u+l+1, u^3+l+1\in  \langle ul\rangle$. Thus,
 $u+l+1=ul$ or $u^3+l+1=ul$. If
 $u+l+1=ul$, then $u(l-1)=l+1$, and so
 $lu(l-1)=u(l-1)l$. Hence, we have $lu=ul$, which is a contradiction.
 If
 $u^3+l+1=ul$, then $(u-1)l=u^3+1$, and hence
 $u(u-1)l=(u-1)lu$.Thus
 $u^2l=ulu$, and so $lu=ul$, which is a contradiction. So  $k>1$.

 We may assume that $M_1=ann_R(a)$.
Let  $f: \frac{R}{M_1}\bigoplus\frac{R}{M_2}\bigoplus\ldots\bigoplus\frac{R}{M_k}\cong \frac{R}{J(R)}$ and let  $f(1+M_1,M_2,M_3,\ldots,M_k)= x+J(R)$. Then  $ax\neq 0$ imply that  $a\not\in ann_R(x)$.   Since $I$ is the unique minimal ideal of $R$, we conclude that $ann_R(x)\cap J(R)=0$.
 But $|\frac{ann_R(x)+J(R)}{J(R)}|=\frac{|R|}{2|J(R)|}$ and consequently, $|RxR|=|\frac{R}{ann_R(x)}|=2|J(R)|$. So $ax\in RxR$ implies that
 $I\subseteq RxR$. Since $ann_R(x)\cap RxR$ is a two sided ideal of $R$, we have $ann_R(x)\cap RxR=0$ or
 $I\subseteq ann_R(x)\cap RxR$. The later case is impossible, so $ann_R(x)\cap RxR=0$ and then $R=ann_R(x)\bigoplus RxR$. Hence  $ann_R(x)$ and $RxR$ are unitary rings.
First assume that
 $|(ann_R(x))^*|$ is not an odd number.
If  $h$ is  the identity element of $RxR$, then
 $(h+a)^2=h^2=h$, and so  $h+a$ is an element of $RxR$ of order $2$.
Let $y\in ann_R(x)$ be the identity element of   $ann_R(x)$ and
let $u\in ann_R(x)$ such that   $u^2=y$.    Since  there is only one involution and  $o(u+(h+a))=o(y+(h+a))=2$, we see that
$u+h=y+h+a$. Consequently,  $u-y=a\in ann_R(x)$, which is  a contradiction. Therefore,  $|(ann_R(x))^*|$ is an odd number, and then by induction hypothesis, Sylow $2$-subgroup of $(RxR)^*$ is cyclic, which is a contradiction.
}
\end{proof}
Now, we can prove our main theorem.\\[.5cm]

{\bf Proof of Theorem \ref{thm2222}}. Let  $|R|=p_1^{\alpha_1}\ldots p_k^{\alpha_k}$ be the canonical decomposition of $|R|$ into its  prime divisors $p_i$. Then $R = R_1\bigoplus R_2\bigoplus \ldots\bigoplus R_k,$ where each $R_i$ is an ideal of order $p_i^{\alpha_i}$.
Hence we may assume that $|R|=p^m$. Let $P\in Syl_p(R^*)$. If $p>2$, then $P$ is cyclic and the proof is clear by Theorem \ref{thm2121}.
If $p=2$, then $P$ is cyclic or $P$ is generalized quaternion groups.
The latter case is impossible by Lemma \ref{t22}. Hence $P$ is cyclic, and so we have the result by Theorem  \ref{thm2121}.

\end{document}